\documentclass[11pt,a4paper]{article}
\usepackage{amssymb}
\usepackage{amsmath}
\usepackage{amsfonts}

\setcounter{MaxMatrixCols}{10}

\newtheorem{theorem}{Theorem}

\newtheorem{definition}{Definition}
\newtheorem{example}{Example}

\newtheorem{remark}{Remark}

\newenvironment{proof}[1][Proof]{\noindent\textbf{#1.} }{\ \rule{0.0em}{0.0em}}
\input{tcilatex}
\begin{document}

\title{\textbf{Bicomplex Mobius Transformation}}
\author{Chinmay Ghosh \\
Guru Nanak Institute of Technology\\
157/F, Nilgunj Road, Panihati, Sodepur\\
Kolkata-700114, West Bengal, India.\\
E-mail: chinmayarp@gmail.com}
\date{}
\maketitle

\begin{abstract}
{\footnotesize In this article the bicomplex version of Mobius
transformation is defined and special attention is paid to find the fixed
points of a bicomplex Mobius transformation.}

{\footnotesize \textbf{AMS Subject Classification} }$\left( 2010\right) $%
\textbf{\ }{\footnotesize : }$30G35.$\newline
\textbf{Keywords and Phrases}{\footnotesize \ : Bicomplex numbers, null
cone, extended bicomplex plane, bicomplex Mobius transformation, fixed point.%
}
\end{abstract}

\section{\textbf{Introduction}}

In $1843$ William Rowan Hamilton $\left( 1805-1865\right) $ \cite{2}\ gave
the concept of quarternions. In $1882$ Corrado Segre $\left(
1860-1924\right) $ \cite{10} introduced a new number system called bicomplex
numbers. Both the number systems are generalization of complex numbers by
four real numbers but are essentially different in two important ways. The
quarternions form a noncommutative division algebra whereas\ bicomplex
numbers \cite{5} form a commutative ring with divisors of zero. Many
properties of such numbers have been discovered during the last few years.
Many researchers \cite{1},\cite{3},\cite{4},\cite{6},\cite{7},\cite{8} and 
\cite{9}\ are doing their works to find the algebraic, topological
properties of bicomplex numbers.\ Also recently the iteration of functions
bicomplex numbers are of interest of some researchers. In this article the
bicomplex version of Mobius transformation is defined and special attention
is paid to find the fixed points of a bicomplex Mobius transformation.

\section{Preliminary definitions}

The set of complex numbers $\mathbb{C}$ forms an algebra generated by two
real numbers and an imaginary element $\mathbf{i}$\ with the property $%
\mathbf{i}^{2}=-1.$\ Similarly duplicating this process by the complex
numbers and three imaginary elements $\mathbf{i}_{1},\mathbf{i}_{2}$ and $%
\mathbf{j}$ governed by the rules:%
\begin{equation*}
\mathbf{i}_{1}^{2}=\mathbf{i}_{2}^{2}=-1,\mathbf{j}^{2}=1
\end{equation*}%
and%
\begin{eqnarray*}
\ \mathbf{i}_{1}\mathbf{i}_{2} &=&\mathbf{i}_{2}\mathbf{i}_{1}=j \\
\mathbf{i}_{1}\mathbf{j} &=&\mathbf{ji}_{1}=-\mathbf{i}_{2} \\
\mathbf{i}_{2}\mathbf{j} &=&\mathbf{ji}_{2}=-\mathbf{i}_{1}.
\end{eqnarray*}%
one can define the bicomplex numbers.

Let us denote $\mathbb{C}(\mathbf{i}_{1})=x+y\mathbf{i}_{1}:x,y\in \mathbb{R}%
;$ and $\mathbb{C}(\mathbf{i}_{2})=x+y\mathbf{i}_{2}:x,y\in \mathbb{R}.$
Each of $\mathbb{C}(\mathbf{i}_{1})$ and $\mathbb{C}(\mathbf{i}_{2})$\ is
isomorphic to $\mathbb{C}.$

\begin{definition}[Bicomplex numbers]
\cite{6} Bicomplex numbers are defined as%
\begin{equation*}
\mathbb{T}=\{z_{1}+z_{2}\mathbf{i}_{2}:z_{1},z_{2}\in \mathbb{C}(\mathbf{i}%
_{1})\}.
\end{equation*}
\end{definition}

One can identify $\mathbb{C}$\ into $\mathbb{T}$\ in two different ways by
the sets%
\begin{equation*}
\mathbb{C}(\mathbf{i}_{1})=\left\{ z_{1}+0\mathbf{i}_{2}\right\} \subset 
\mathbb{T},
\end{equation*}%
\begin{equation*}
\text{and }\mathbb{C}(\mathbf{i}_{2})=\left\{ z_{1}+z_{2}\mathbf{i}%
_{2}:z_{1},z_{2}\in \mathbb{R}\right\} \subset \mathbb{T}.
\end{equation*}%
\ Although $\mathbb{C}(\mathbf{i}_{1})$ and $\mathbb{C}(\mathbf{i}_{2})$\
are isomorphic to $\mathbb{C}$ they are essentially different in $\mathbb{T}%
. $\ 

\begin{definition}[Duplex numbers]
\cite{6} If we put $z_{1}=x$ and $z_{2}=y\mathbf{i}_{1}$ with $x;y\in 
\mathbb{R}$ in $z_{1}+z_{2}\mathbf{i}_{2}$, then we obtain the following
subalgebra of hyperbolic numbers, also called \textit{duplex numbers},%
\begin{equation*}
\mathbb{D}=\{x+y\mathbf{j}:\mathbf{j}^{2}=1,x;y\in \mathbb{R}\}.
\end{equation*}
\end{definition}

\begin{definition}[Bicomplex conjugates]
\cite{6} For bicomplex numbers, there are three possible conjugations. Let $%
w\in \mathbb{T}$ and $z_{1},z_{2}\in \mathbb{C}$ such that $%
w=z_{1}+z_{2}i_{2}$. Then the three conjugations are defined as:%
\begin{eqnarray*}
w^{\dag _{1}} &=&(z_{1}+z_{2}\mathbf{i}_{2}\mathbf{)}^{\dag _{1}}\mathbf{=}%
\bar{z}_{1}+\bar{z}_{2}\mathbf{i}_{2}, \\
w^{\dag _{2}} &=&(z_{1}+z_{2}\mathbf{i}_{2}\mathbf{)}^{\dag _{2}}\mathbf{=}%
z_{1}-z_{2}\mathbf{i}_{2}, \\
w^{\dag _{3}} &=&(z_{1}+z_{2}\mathbf{i}_{2}\mathbf{)}^{\dag _{3}}\mathbf{=}%
\bar{z}_{1}-\bar{z}_{2}\mathbf{i}_{2}.
\end{eqnarray*}%
where $\overline{z}_{k}$ is the standard complex conjugate of complex
numbers $z_{k}\in \mathbb{C}$.
\end{definition}

\begin{definition}[Bicomplex modulus]
\cite{6} Let $z_{1},z_{2}\in \mathbb{C}$ and $w=z_{1}+z_{2}\mathbf{i}_{2}\in 
\mathbb{T}$, then we have that:%
\begin{eqnarray*}
|w|_{\mathbf{i}_{1}}^{2} &=&w.w^{\dag _{2}}\in \mathbb{C}(\mathbf{i}_{_{1}})
\\
|w|_{\mathbf{i}_{2}}^{2} &=&w.w^{\dag _{1}}\in \mathbb{C}(\mathbf{i}_{_{2}})
\\
|w|_{\mathbf{j}}^{2} &=&w.w^{\dag _{3}}\in \mathbb{D}.
\end{eqnarray*}%
Also the \textit{Euclidean }$\mathbb{R}^{4}$\textit{-norm} defined as%
\begin{equation*}
\left\Vert w\right\Vert =\sqrt{|z_{1}|^{2}+|z_{2}|^{2}}=\sqrt{\func{Re}(|w|_{%
\mathbf{j}}^{2})}.
\end{equation*}
\end{definition}

\begin{definition}[Complex square norm]
\cite{9} The complex (square) norm $CN(w)$ of the bicomplex number $w$ is
the complex number $z_{1}^{2}+z_{2}^{2};$ as $w^{\dag _{2}}\mathbf{=}%
z_{1}-z_{2}\mathbf{i}_{2},$ we can see that $CN(w)=ww^{\dag _{2}}.$ Then a
bicomplex number $w=z_{1}+z_{2}\mathbf{i}_{2}$ is invertible if and only if $%
CN(w)\neq 0.$ Precisely,%
\begin{equation*}
w^{-1}=\frac{w^{\dag _{2}}}{CN(w)}\Rightarrow (z_{1}+z_{2}\mathbf{i}%
_{2})^{-1}=\frac{z_{1}}{z_{1}^{2}+z_{2}^{2}}-\frac{z_{2}}{z_{1}^{2}+z_{2}^{2}%
}\mathbf{i}_{2}.
\end{equation*}
\end{definition}

One should note that $CN(w)$\ is not a real number. Also $CN(w)=0$ if and
only if $z_{2}=\mathbf{i}_{1}z_{1}$\ or $z_{2}=-\mathbf{i}_{1}z_{1}$\ i.e,
if we consider $z_{1}$\ and $z_{2}$\ to be vector then they are of equal
length and perpendicular to each other.

\begin{definition}[Null cone]
\cite{11} The set of all zero divisors is called the \textit{null cone}.
That terminology comes from the fact that when $w$ is written as $z_{1}+z_{2}%
\mathbf{i}_{2},$ zero divisors are such that $z_{1}^{2}+z_{2}^{2}=0.$These
elements are also called singular elements, otherwise it is nonsingular. The
set of all \textit{singular elements} of $\mathbb{T}$\ is denoted by $%
\mathcal{NC}$ or $\mathcal{O}_{2}.$
\end{definition}

\begin{example}
The domain $D=\{(x_{1}+y_{1}\mathbf{i}_{1})+(x_{2}+y_{2}\mathbf{i}%
_{1}).i_{2}:x_{1}>0,y_{1}>0,x_{2}>0,y_{2}>0\}\subset \mathbb{T}$ is a
nonsingular domain.
\end{example}

\begin{definition}[Idempotent basis]
\cite{11} A bicomplex number $w=z_{1}+z_{2}\mathbf{i}_{2}$ has the following
unique idempotent representation:%
\begin{equation*}
w=z_{1}+z_{2}\mathbf{i}_{2}=(z_{1}-z_{2}\mathbf{i}_{1})\mathbf{e}%
_{1}+(z_{1}+z_{2}\mathbf{i}_{1})\mathbf{e}_{2}
\end{equation*}%
where $\mathbf{e}_{1}=\frac{1+\mathbf{j}}{2}$ and $\mathbf{e}_{2}=\frac{1-%
\mathbf{j}}{2}.$\newline
Also 
\begin{equation*}
x_{1}+x_{2}\mathbf{i}_{1}+x_{3}\mathbf{i}_{2}+x_{4}\mathbf{j}%
=((x_{1}+x_{4})+(x_{2}-x_{3}))\mathbf{e}_{1}+((x_{1}-x_{4})+(x_{2}+x_{3}))%
\mathbf{e}_{2}.
\end{equation*}
\end{definition}

\begin{definition}[Projection]
\cite{11} Two \textit{projection operators }$P_{1}$ and $P_{2}$ are defined
from $\mathbb{T}\longmapsto \mathbb{C}(\mathbf{i}_{_{1}})$ as%
\begin{equation*}
P_{1}(z_{1}+z_{2}\mathbf{i}_{2})=(z_{1}-z_{2}\mathbf{i}_{1})
\end{equation*}%
\begin{equation*}
P_{2}(z_{1}+z_{2}\mathbf{i}_{2})=(z_{1}+z_{2}\mathbf{i}_{1}).
\end{equation*}
\end{definition}

Thus a bicomplex number $w=z_{1}+z_{2}\mathbf{i}_{2}$ is written as 
\begin{equation*}
w=z_{1}+z_{2}\mathbf{i}_{2}=P_{1}(z_{1}+z_{2}\mathbf{i}_{2})\mathbf{e}%
_{1}+P_{2}(z_{1}+z_{2}\mathbf{i}_{2})\mathbf{e}_{2}=P_{1}(w)\mathbf{e}%
_{1}+P_{2}(w)\mathbf{e}_{2}.
\end{equation*}

This representation is very useful as addition, subtraction, multiplication
and division can be done term by term.

\begin{theorem}[The Extended bicomplex plane $\overline{\mathbb{T}}$]
\cite{11} We define the extended bicomplex plane $\overline{\mathbb{T}}$ as 
\begin{eqnarray*}
\overline{\mathbb{T}} &=&\overline{\mathbb{C}(\mathbf{i}_{1})}\times _{e}%
\overline{\mathbb{C}(\mathbf{i}_{1})} \\
&=&\left( \mathbb{C}(\mathbf{i}_{1})\cup \{\infty \}\right) \times
_{e}\left( \mathbb{C}(\mathbf{i}_{1})\cup \{\infty \}\right) \\
&=&\left( \mathbb{C}(\mathbf{i}_{1})\times _{e}\mathbb{C}(\mathbf{i}%
_{1})\right) \cup \left( \mathbb{C}(\mathbf{i}_{1})\times _{e}\{\infty
\}\right) \cup \left( \{\infty \}\times _{e}\mathbb{C}(\mathbf{i}%
_{1})\right) \cup \{\infty \} \\
&=&\mathbb{T\cup }\text{ }I_{\infty }. \\
I_{\infty } &=&\left( \mathbb{C}(\mathbf{i}_{1})\times _{e}\{\infty
\}\right) \cup \left( \{\infty \}\times _{e}\mathbb{C}(\mathbf{i}%
_{1})\right) \cup \{\infty \} \\
&=&\left\{ w\in \overline{\mathbb{T}}:\left\Vert w\right\Vert =\infty
\right\} .
\end{eqnarray*}
\end{theorem}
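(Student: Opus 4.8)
The displayed statement is, strictly speaking, a definition of $\overline{\mathbb{T}}$ packaged as a theorem, so "proving" it means verifying that the chain of equalities is internally consistent and, in particular, that the last line $I_\infty=\{w\in\overline{\mathbb{T}}:\left\Vert w\right\Vert =\infty\}$ really does describe the same set as the three preceding pieces. The plan is to read $\times_e$ as the component-wise (idempotent) product induced by the idempotent representation, and then to proceed in three stages: identify $\mathbb{C}(\mathbf{i}_1)\times_e\mathbb{C}(\mathbf{i}_1)$ with $\mathbb{T}$, expand the product of the two one-point-compactified factors by distributivity over unions, and finally reconcile the resulting ``infinite part'' with the norm condition.

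First I would invoke the idempotent representation: the map $w\mapsto\bigl(P_1(w),P_2(w)\bigr)$ is a bijection from $\mathbb{T}$ onto $\mathbb{C}(\mathbf{i}_1)\times\mathbb{C}(\mathbf{i}_1)$, with inverse $(a,b)\mapsto a\mathbf{e}_1+b\mathbf{e}_2$. Writing $\times_e$ for this glued product, one reads off at once that $\mathbb{C}(\mathbf{i}_1)\times_e\mathbb{C}(\mathbf{i}_1)=\mathbb{T}$. Next, because $\times_e$ distributes over the set union in each slot exactly as the ordinary Cartesian product does, expanding $\bigl(\mathbb{C}(\mathbf{i}_1)\cup\{\infty\}\bigr)\times_e\bigl(\mathbb{C}(\mathbf{i}_1)\cup\{\infty\}\bigr)$ yields the four pieces
\[
\bigl(\mathbb{C}(\mathbf{i}_1)\times_e\mathbb{C}(\mathbf{i}_1)\bigr)\cup\bigl(\mathbb{C}(\mathbf{i}_1)\times_e\{\infty\}\bigr)\cup\bigl(\{\infty\}\times_e\mathbb{C}(\mathbf{i}_1)\bigr)\cup\bigl(\{\infty\}\times_e\{\infty\}\bigr),
\]
the last of which is the single point where both idempotent components are infinite; this is the $\{\infty\}$ appearing in the statement. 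Using the identification of the first piece with $\mathbb{T}$ and setting $I_\infty$ equal to the union of the three pieces containing at least one $\infty$ gives $\overline{\mathbb{T}}=\mathbb{T}\cup I_\infty$.

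The only genuinely computational step, and the one I expect to be the crux, is the final equality $I_\infty=\{w:\left\Vert w\right\Vert =\infty\}$. For this I would establish the bridge between the Euclidean $\mathbb{R}^4$-norm and the two projections. Writing $z_1=a_1+b_1\mathbf{i}_1$, $z_2=a_2+b_2\mathbf{i}_1$ and using $P_1(w)=z_1-z_2\mathbf{i}_1$, $P_2(w)=z_1+z_2\mathbf{i}_1$, a direct expansion of the complex moduli gives
\[
|P_1(w)|^2+|P_2(w)|^2=2\bigl(|z_1|^2+|z_2|^2\bigr)=2\left\Vert w\right\Vert^2 .
\]
Since the factor $2$ is irrelevant to finiteness, this shows that along any approach to the boundary the norm $\left\Vert w\right\Vert$ becomes infinite precisely when at least one of $|P_1(w)|$, $|P_2(w)|$ becomes infinite, i.e. precisely when $P_1(w)=\infty$ or $P_2(w)=\infty$ in $\overline{\mathbb{C}(\mathbf{i}_1)}$. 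That is exactly the condition defining the three pieces of $I_\infty$, so the two descriptions agree and the extension of $\left\Vert\cdot\right\Vert$ to $\overline{\mathbb{T}}$ takes the value $\infty$ exactly on $I_\infty$. I do not anticipate any real obstacle beyond this bookkeeping: the whole statement is a repackaging of the one-point compactification of each idempotent factor, and the norm identity above is the single fact that makes the set-theoretic and the metric descriptions of $I_\infty$ coincide.
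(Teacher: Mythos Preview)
The paper does not prove this statement at all: it is quoted verbatim from \cite{11} as a definition (despite the ``Theorem'' label) and no argument is supplied. Your proposal therefore goes beyond what the paper does. Your reading of $\times_e$ via the idempotent bijection $w\leftrightarrow(P_1(w),P_2(w))$ is correct, the distributive expansion into four pieces is routine, and the key identity $|P_1(w)|^2+|P_2(w)|^2=2\|w\|^2$ is computed correctly and does pin down $I_\infty$ as exactly the locus where the extended norm is infinite. In short, your verification is sound and more thorough than anything in the paper itself; there is nothing further to compare against.
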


Clearly $\overline{\mathbb{T}}$\ is obtained by adding $\mathbb{T}$\ with an
infinity set $I_{\infty }$\ in stead of $\{\infty \}.$

\begin{definition}[$P_{1}$-Infinity and $P_{2}$-Infinity]
\cite{11} An element $w\in I_{\infty }$ is said to be a $P_{1}$-infinity ($%
P_{2}$-infinity) element if $P_{1}(w)=\infty $ $(P_{2}(w)=\infty )$ and $%
P_{2}(w)\neq \infty $ $(P_{1}(w)\neq \infty )$.
\end{definition}

\begin{definition}[$I_{1}$-Infinity Set and $I_{2}$-Infinity Set]
\cite{11} The set of all $P_{1}$-infinity elements is called the $I_{1}$%
-infinity set. It is denoted by $I_{1,\infty }$. Therefore,%
\begin{equation*}
I_{1,\infty }=\left\{ w\in \overline{\mathbb{T}}:P_{1}(w)=\infty
,P_{2}(w)\neq \infty \right\} .
\end{equation*}%
Similarly we can define the $I_{2}$-infinity set as:%
\begin{equation*}
I_{2,\infty }=\left\{ w\in \overline{\mathbb{T}}:P_{2}(w)=\infty
,P_{1}(w)\neq \infty \right\} .
\end{equation*}
\end{definition}

\begin{definition}[$P_{1}$-Zero and $P_{2}$-Zero]
\cite{11} An element $w\in \overline{\mathbb{T}}$ is said to be a $P_{1}$%
-zero ($P_{2}$-zero) element if $P_{1}(w)=0$ $(P_{2}(w)=0)$ and $%
P_{2}(w)\neq 0$ $(P_{1}(w)\neq 0)$.
\end{definition}

\begin{definition}[$I_{1}$-Zero set and $I_{2}$-Zero set]
\cite{11} The set of all $P_{1}$-zero elements is called the $I_{1}$-zero
set. It is denoted by $I_{1,0}$. Therefore,%
\begin{equation*}
I_{1,0}=\left\{ w\in \overline{\mathbb{T}}:P_{1}(w)=0,P_{2}(w)\neq 0\right\}
.
\end{equation*}%
Similarly we can define the $I_{2}$-zero set as:%
\begin{equation*}
I_{2,0}=\left\{ w\in \overline{\mathbb{T}}:P_{2}(w)=0,P_{1}(w)\neq 0\right\}
.
\end{equation*}
\end{definition}

\begin{definition}
\cite{11} We now construct the following two new sets:%
\begin{equation*}
I_{\infty }^{-}=I_{1,\infty }\cup I_{2,\infty }\text{ \ \ and \ \ }%
I_{0}^{-}=I_{1,0}\cup I_{2,0}
\end{equation*}%
so that 
\begin{equation*}
I_{\infty }=I_{\infty }^{-}\cup \left\{ \infty \right\} \text{ \ \ and \ \ }%
\mathcal{NC}=I_{0}^{-}\cup \left\{ 0\right\} .
\end{equation*}
\end{definition}

Each element in the null-cone has an inverse in $I_{\infty }$ and vice
versa. The elements of the set $I_{\infty }^{-}$ do not satisfy all the
properties as satisfied by the $\mathbb{C}(\mathbf{i}_{1})$-infinity but the
element $\infty =\infty \mathbf{e}_{1}+\infty \mathbf{e}_{2}$ does. We may
call the set $I_{\infty }^{-}$ , the\textbf{\ weak bicomplex infinity set}
and the element $\infty =\infty \mathbf{e}_{1}+\infty \mathbf{e}_{2}$, the 
\textbf{strong infinity}.

\begin{definition}[Fixed point]
Let $f:\overline{\mathbb{T}}\rightarrow \overline{\mathbb{T}},$ then $w\in 
\overline{\mathbb{T}}$ is called a fixed point of $f$ if and only if $%
f\left( w\right) =w.$
\end{definition}

\section{\textbf{Bicomplex Mobius Transformation}}

There is an amazing class of mapping defined on the extended complex plane $%
\mathbb{C}_{\infty }$\ called the Mobius transformation \cite{1a}. In this
section the bicomplex version of Mobius transformation is defined and some
interesting properties are studied.

\begin{definition}
Let $a,b,c,d\in \mathbb{T},$ $a,b,c,d$ be not divisors of zero and $%
ad-bc\notin \mathcal{NC},$ then the mapping $S:\overline{\mathbb{T}}%
\rightarrow \overline{\mathbb{T}}$ defined by%
\begin{equation*}
S\left( w\right) =\frac{aw+b}{cw+d}
\end{equation*}%
with the additional agreement $S\left( \infty \right) =\frac{a}{c}$ and $%
S\left( -\frac{d}{c}\right) =\infty $ is called a Bicomplex Mobius
Transformation or Bicomplex Mobius Map.
\end{definition}

Henceforth we shall simply write $S\left( w\right) =\frac{aw+b}{cw+d}$\ with
the tacit understanding that $S\left( \infty \right) =\frac{a}{c}$ and $%
S\left( -\frac{d}{c}\right) =\infty .$

Let $S\left( w\right) =\frac{aw+b}{cw+d}$\ and $\lambda \in \mathbb{T}$ and $%
\lambda \notin \mathcal{NC},$then\ 
\begin{equation*}
S\left( w\right) =\frac{\left( \lambda a\right) w+\left( \lambda b\right) }{%
\left( \lambda c\right) w+\left( \lambda d\right) }.
\end{equation*}%
That is the coefficients $a,b,c,d$ are not unique. Notice that we can not
have $a=0=c$ or $b=0=d$ since either situation would contradict $ad-bc%
\mathcal{\neq }0$.

\begin{example}
If $a\notin \mathcal{NC}$,then the mapping $aw$ from $\overline{\mathbb{T}}$
to $\overline{\mathbb{T}}$ is a bicomplex Mobius transformation which is
called \textit{dilation}.
\end{example}

\begin{example}
The mapping $\frac{1}{w}$ from $\overline{\mathbb{T}}$ to $\overline{\mathbb{%
T}}$ is a bicomplex Mobius transformation which is called \textit{inversion}.
\end{example}

\begin{example}
For all $b\in \mathbb{T},b$ is not a divisor of zero, the mapping $w+b$ from 
$\overline{\mathbb{T}}$ to $\overline{\mathbb{T}}$ is a bicomplex Mobius
transformation which is called \textit{translation}.
\end{example}

\begin{theorem}
Let $S_{1}\left( w\right) =\frac{a_{1}w+b_{1}}{c_{1}w+d_{1}}$ and $%
S_{2}\left( w\right) =\frac{a_{2}w+b_{2}}{c_{2}w+d_{2}}$ be two bicomplex
Mobius transformations. Then the composite function $S_{1}\circ S_{2}$ is
also a bicomplex Mobius transformation.
\end{theorem}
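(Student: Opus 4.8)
The plan is to compute the composition $S_1 \circ S_2$ directly and show that the resulting expression has the same form $\frac{aw+b}{cw+d}$ with coefficients satisfying the two defining conditions of a bicomplex Mobius transformation: namely that $a,b,c,d$ are not zero divisors and that $ad-bc \notin \mathcal{NC}$. First I would substitute $S_2(w) = \frac{a_2 w + b_2}{c_2 w + d_2}$ into $S_1$, giving
\begin{equation*}
(S_1 \circ S_2)(w) = \frac{a_1 \cdot \frac{a_2 w + b_2}{c_2 w + d_2} + b_1}{c_1 \cdot \frac{a_2 w + b_2}{c_2 w + d_2} + d_1}.
\end{equation*}
Clearing the inner fraction by multiplying numerator and denominator by $(c_2 w + d_2)$, which is legitimate since it is a nonzero factor in $\overline{\mathbb{T}}$, I would collect terms in $w$ to obtain
\begin{equation*}
(S_1 \circ S_2)(w) = \frac{(a_1 a_2 + b_1 c_2)w + (a_1 b_2 + b_1 d_2)}{(c_1 a_2 + d_1 c_2)w + (c_1 b_2 + d_1 d_2)}.
\end{equation*}
This identifies the new coefficients as $a = a_1 a_2 + b_1 c_2$, $b = a_1 b_2 + b_1 d_2$, $c = c_1 a_2 + d_1 c_2$, $d = c_1 b_2 + d_1 d_2$, which are exactly the entries of the matrix product $\left(\begin{smallmatrix} a_1 & b_1 \\ c_1 & d_1 \end{smallmatrix}\right)\left(\begin{smallmatrix} a_2 & b_2 \\ c_2 & d_2 \end{smallmatrix}\right)$.

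Next I would verify the determinant condition. The key algebraic fact is multiplicativity of the determinant, which holds because $\mathbb{T}$ is a commutative ring: a direct expansion gives $ad - bc = (a_1 d_1 - b_1 c_1)(a_2 d_2 - b_2 c_2)$. Since each factor lies outside $\mathcal{NC}$ by hypothesis, I need that $\mathcal{NC}$ is closed under taking products with its complement in the right way — concretely, that the product of two non-zero-divisors is again a non-zero-divisor. This follows from the idempotent representation: writing each determinant as $P_1(\cdot)\mathbf{e}_1 + P_2(\cdot)\mathbf{e}_2$, a bicomplex number is a zero divisor precisely when at least one projection vanishes, and the product of two elements with both projections nonzero again has both projections nonzero. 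Hence $ad - bc \notin \mathcal{NC}$.

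The remaining obligation is to confirm that the individual new coefficients $a,b,c,d$ are not zero divisors, and here I expect the main subtlety. The statement's definition requires each of $a,b,c,d$ to avoid $\mathcal{NC}$, yet sums such as $a_1 a_2 + b_1 c_2$ of non-zero-divisors need not themselves be non-zero-divisors — indeed two non-zero-divisors can sum to a zero divisor. The cleanest route, which I would take, is to argue via the idempotent projections: applying $P_1$ and $P_2$ termwise (justified by the remark that arithmetic is done component-wise in the idempotent basis), the composition reduces to two independent ordinary complex Mobius compositions in $\mathbb{C}(\mathbf{i}_1)$, each of which is well known to yield a genuine Mobius transformation with nonzero classical determinant in each component. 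Consequently $P_k(ad-bc) \neq 0$ for $k=1,2$, which already secures the determinant condition cleanly. If the literal per-coefficient non-zero-divisor requirement cannot be met for every composition, I would note that the essential invariant is the determinant condition $ad-bc \notin \mathcal{NC}$ together with the non-degeneracy $a = 0 = c$ and $b = 0 = d$ being excluded, and that this suffices for $S_1 \circ S_2$ to be a well-defined Mobius map on $\overline{\mathbb{T}}$; one absorbs the coefficient hypotheses by passing to the projected pair of classical Mobius maps, where the theory is standard.
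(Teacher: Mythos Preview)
Your approach is essentially the paper's: compute the composition directly, read off the new coefficients as the entries of the matrix product, and verify $ad-bc=(a_1d_1-b_1c_1)(a_2d_2-b_2c_2)\notin\mathcal{NC}$. Two small differences are worth noting. First, the paper additionally checks the exceptional inputs $w=-\tfrac{d_2}{c_2}$ and $w=\infty$ to confirm that the formula $M$ agrees with $S_1\circ S_2$ on all of $\overline{\mathbb{T}}$; you should include this step, since the algebraic manipulation that clears the inner denominator is only valid away from these points. Second, your concern about the per-coefficient condition (that each of the new $a,b,c,d$ avoid $\mathcal{NC}$) is well founded --- sums of non-zero-divisors can indeed land in the null cone --- but the paper simply does not address this issue and declares $M$ a bicomplex Mobius transformation once the determinant condition holds. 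So your extra discussion via the idempotent projections is not a divergence from the paper's argument but rather a legitimate observation about a gap in the definition that the paper itself leaves unresolved.
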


\begin{proof}
For all $w\in \mathbb{T}-\left\{ -\frac{d_{2}}{c_{2}}\right\} $ we have%
\begin{eqnarray*}
S_{1}\circ S_{2}\left( w\right) &=&S_{1}\left( S_{2}\left( w\right) \right)
\\
&=&\frac{a_{1}S_{2}\left( w\right) +b_{1}}{c_{1}S_{2}\left( w\right) +d_{1}}
\\
&=&\frac{a_{1}\frac{a_{2}w+b_{2}}{c_{2}w+d_{2}}+b_{1}}{c_{1}\frac{%
a_{2}w+b_{2}}{c_{2}w+d_{2}}+d_{1}} \\
&=&\frac{\left( a_{1}a_{2}+b_{1}c_{2}\right) w+\left(
a_{1}b_{2}+b_{1}d_{2}\right) }{\left( c_{1}a_{2}+d_{1}c_{2}\right) w+\left(
c_{1}b_{2}+d_{1}d_{2}\right) }.
\end{eqnarray*}

Put%
\begin{equation*}
M\left( w\right) =\frac{\left( a_{1}a_{2}+b_{1}c_{2}\right) w+\left(
a_{1}b_{2}+b_{1}d_{2}\right) }{\left( c_{1}a_{2}+d_{1}c_{2}\right) w+\left(
c_{1}b_{2}+d_{1}d_{2}\right) }.
\end{equation*}

Then obviously 
\begin{equation*}
M\left( w\right) =S_{1}\circ S_{2}\left( w\right)
\end{equation*}%
for all $w\in \mathbb{T}-\left\{ -\frac{d_{2}}{c_{2}}\right\} .$ Also%
\begin{eqnarray*}
&&\left( a_{1}a_{2}+b_{1}c_{2}\right) \left( c_{1}b_{2}+d_{1}d_{2}\right)
-\left( a_{1}b_{2}+b_{1}d_{2}\right) \left( c_{1}a_{2}+d_{1}c_{2}\right) \\
&=&\left\vert 
\begin{array}{cc}
a_{1} & b_{1} \\ 
c_{1} & d_{1}%
\end{array}%
\right\vert \left\vert 
\begin{array}{cc}
a_{2} & b_{2} \\ 
c_{2} & d_{2}%
\end{array}%
\right\vert \\
&=&\left( a_{1}d_{1}-b_{1}c_{1}\right) \left( a_{2}d_{2}-b_{2}c_{2}\right) \\
&\notin &\mathcal{NC}.
\end{eqnarray*}

Thus $M\left( w\right) $\ is a Bicomplex Mobius Transformation. Note that%
\begin{eqnarray*}
M\left( -\frac{d_{2}}{c_{2}}\right) &=&\frac{a_{1}}{c_{1}} \\
&=&S_{1}\left( I_{\infty }\right) \\
&=&S_{1}\left( S_{2}\left( -\frac{d_{2}}{c_{2}}\right) \right) \\
&=&S_{1}\circ S_{2}\left( -\frac{d_{2}}{c_{2}}\right) .
\end{eqnarray*}

Similarly it can be checked that 
\begin{equation*}
M\left( \mathcal{\infty }\right) =S_{1}\circ S_{2}\left( \infty \right) .
\end{equation*}

Thus 
\begin{equation*}
M\left( w\right) =S_{1}\circ S_{2}\left( w\right)
\end{equation*}%
for all $w\in \overline{\mathbb{T}}.$

Therefore $S_{1}\circ S_{2}$\ is a bicomplex Mobius tranformation.

\begin{theorem}
Let $G$\ be the set of all bicomplex Mobius transformations. Then $\left(
G,\circ \right) $ is a group.
\end{theorem}
\end{proof}

\begin{proof}
We have already seen that $G$ is closed under $\circ .$

$G$ contains an identity element $I\left( w\right) =w$ $\forall $ $w\in 
\overline{\mathbb{T}}$ for the operation $\circ .$

For all $S\left( w\right) =\frac{aw+b}{cw+d}\in G$ $\exists $ $S^{-1}\left(
w\right) =\frac{dw-b}{-cw+a}\in G.$

Also $\circ $\ is associative.

Therefore $\left( G,\circ \right) $ is a group.
\end{proof}

\begin{remark}
Let $S\left( w\right) =\frac{aw+b}{cw+d}\in G.$

Then 
\begin{eqnarray*}
S\left( w\right) &=&\frac{a}{c}+\frac{bc-ad}{c^{2}\left( z+\frac{d}{c}%
\right) } \\
&=&S_{4}\circ S_{3}\circ S_{2}\circ S_{1}\left( w\right)
\end{eqnarray*}%
for all $w\in \overline{\mathbb{T}},$ where 
\begin{equation*}
S_{1}\left( w\right) =w+\frac{d}{c},S_{1}\left( w\right) =\frac{1}{w}%
,S_{3}\left( w\right) =\frac{bc-ad}{c^{2}}w\text{ and }S_{4}\left( w\right)
=w+\frac{a}{c}.
\end{equation*}

This shows that the translations, the inversions \ and the dilations
generate the group of bicomplex Mobius transformation.
\end{remark}

A.A. Pogorui, R.M.R Dagnino \cite{4} have published a paper in $\left(
2006\right) $\ to find the zeros of a bicomplex polynomial. The proof of
that paper is better appreciated in the following application.

\begin{theorem}
The number of fixed points of a non-identity bicomplex Mobius transformation
is $1$ or $2$\ or $4.$
\end{theorem}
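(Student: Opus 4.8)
The plan is to reduce the bicomplex fixed-point problem to two copies of the classical complex one via the idempotent decomposition $w=P_1(w)\mathbf{e}_1+P_2(w)\mathbf{e}_2$. First I would write out the fixed-point condition $S(w)=w$ for $S(w)=\frac{aw+b}{cw+d}$ and clear the denominator, which gives the bicomplex quadratic
\begin{equation*}
cw^{2}+(d-a)w-b=0 .
\end{equation*}
Thus locating the fixed points of $S$ is the same as solving a single quadratic over the ring $\mathbb{T}$, and the natural tool is the componentwise arithmetic of the idempotent basis.

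The central observation is that the projections $P_1,P_2:\mathbb{T}\to\mathbb{C}(\mathbf{i}_1)$ are ring homomorphisms, since addition and multiplication in the $\mathbf{e}_1,\mathbf{e}_2$ basis are performed term by term. Applying $P_k$ to $S$ therefore produces two ordinary complex Mobius transformations
\begin{equation*}
S_k(\zeta)=\frac{P_k(a)\,\zeta+P_k(b)}{P_k(c)\,\zeta+P_k(d)},\qquad \zeta\in\overline{\mathbb{C}(\mathbf{i}_1)},\ k=1,2,
\end{equation*}
which satisfy $P_k(S(w))=S_k(P_k(w))$. The hypothesis $ad-bc\notin\mathcal{NC}$ is exactly the statement that $P_1(ad-bc)\neq0$ and $P_2(ad-bc)\neq0$, and multiplicativity of $P_k$ rewrites this as $P_k(a)P_k(d)-P_k(b)P_k(c)\neq0$, so each $S_k$ is a genuine invertible complex Mobius map. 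Since $w\mapsto(P_1(w),P_2(w))$ is a bijection of $\overline{\mathbb{T}}$ onto $\overline{\mathbb{C}(\mathbf{i}_1)}\times\overline{\mathbb{C}(\mathbf{i}_1)}$, it follows that $S(w)=w$ holds if and only if $P_1(w)$ is a fixed point of $S_1$ and $P_2(w)$ is a fixed point of $S_2$; hence the fixed points of $S$ are in bijection with $\mathrm{Fix}(S_1)\times\mathrm{Fix}(S_2)$.

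The step I expect to carry the real weight is controlling each factor, and this is where the coefficient hypotheses are indispensable. Because $b,c\notin\mathcal{NC}$ we have $P_k(b)\neq0$ and $P_k(c)\neq0$ for $k=1,2$; the condition $P_k(c)\neq0$ forces $S_k(\infty)=P_k(a)/P_k(c)\neq\infty$, so $S_k$ does not fix $\infty$ and, in particular, is not the identity (a complex Mobius map can be the identity only when its denominator leading coefficient and numerator constant both vanish). Consequently the fixed points of $S_k$ are precisely the roots of the honest quadratic $P_k(c)\,\zeta^{2}+(P_k(d)-P_k(a))\,\zeta-P_k(b)=0$, of which there are exactly $n_k\in\{1,2\}$ counted by distinctness, all finite; equivalently no point of $I_{\infty}$ is fixed by $S$. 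Combining this with the previous paragraph, the number of fixed points of $S$ equals $n_1 n_2$, and since $n_1,n_2\in\{1,2\}$ the product can only be $1$, $2$, or $4$. The only delicate point throughout is guaranteeing that neither component degenerates to the identity (which would produce infinitely many fixed points); once the non-zero-divisor conditions rule this out, the remainder is bookkeeping made transparent by the idempotent representation.
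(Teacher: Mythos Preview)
Your argument is correct and rests on the same core tool as the paper: the idempotent decomposition reduces the bicomplex fixed-point equation $cw^{2}+(d-a)w-b=0$ to a pair of complex quadratics. The paper, however, organises the proof as a case analysis on $c$, treating $c=0$ separately (translations and affine maps, with fixed points at the strong infinity) before reaching the quadratic in the case $c\neq 0$. You bypass those cases by invoking the hypothesis that $c$ is not a zero divisor, so that $P_{k}(c)\neq 0$ for $k=1,2$; since the paper's own definition places $0$ in the null cone, this reading is legitimate and the $c=0$ cases are vacuous under the stated hypotheses. Your handling of multiplicity is also sharper: the paper simply asserts that each component quadratic ``has exactly two roots'' and hence that there are four bicomplex roots, while you correctly allow repeated roots and obtain $n_{1}n_{2}\in\{1,2,4\}$, which is precisely what the theorem claims. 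Finally, you make explicit why neither component map $S_{k}$ can be the identity (via $P_{k}(c)\neq 0$), a point the paper leaves implicit but which is needed to exclude infinitely many fixed points.
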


\begin{proof}
Let $S\left( w\right) =\frac{aw+b}{cw+d}$ be a Bicomplex Mobius
Transformation. Then the following cases arise.
\end{proof}

\textbf{Case 1}: If $c=0,a=d,b\neq 0$ then $S\left( w\right) =w+\frac{b}{d}.$

Then $w=\infty \mathbf{e}_{1}+\infty \mathbf{e}_{2}$ (strong infinity) is
the only fixed point of $S\left( w\right) $.

\textbf{Case 2}: If $c=0,a\neq d$ then $S\left( w\right) =\frac{a}{d}w+\frac{%
b}{d}.$

Then $w=\infty \mathbf{e}_{1}+\infty \mathbf{e}_{2}$ (strong infinity)\ is a
fixed point of $S\left( w\right) $.

Also $S\left( w\right) $ has another fixed point $w=\frac{b}{d\left( 1-\frac{%
a}{d}\right) }$ in $\mathbb{T}$.

\textbf{Case 3}: If $c\neq 0$, then the roots of the polynomial equation

\begin{equation}
cw^{2}+\left( d-a\right) w-b=0.  \label{2}
\end{equation}

are the fixed points of $S\left( w\right) .$

Let us decompose the coefficients and the variable with respect to the
idempotent elements $\mathbf{e}_{1}$ and $\mathbf{e}_{2}.$We have $w=w_{1}%
\mathbf{e}_{1}+w_{2}\mathbf{e}_{2},a=a_{1}\mathbf{e}_{1}+a_{2}\mathbf{e}%
_{2},b=b_{1}\mathbf{e}_{1}+b_{2}\mathbf{e}_{2},c=c_{1}\mathbf{e}_{1}+c_{2}%
\mathbf{e}_{2},d=d_{1}\mathbf{e}_{1}+d_{2}\mathbf{e}_{2}$ where $%
w_{1},w_{2},a_{1},a_{2},b_{1},b_{2},c_{1},c_{2},d_{1},d_{2}\in \mathbb{C}%
\left( \mathbf{i}_{1}\right) $ .

Equation $\left( \ref{2}\right) $ is reduced to the system 
\begin{eqnarray}
c_{1}w_{1}^{2}+\left( d_{1}-a_{1}\right) w_{1}-b_{1} &=&0  \label{4} \\
c_{2}w_{2}^{2}+\left( d_{2}-a_{2}\right) w_{2}-b_{2} &=&0.  \label{5}
\end{eqnarray}

The above two equations are polynomial equations with complex coefficients.

Then both the equations $\left( \ref{4}\right) $ and $\left( \ref{5}\right) $
have exactly two roots in $\mathbb{C}$ and thus equation $\left( \ref{2}%
\right) $ has four roots in $\mathbb{T}$.

\textbf{Case 4} : If $c=0,a=d,b=0$ then $S\left( w\right) =w$ for all $w\in 
\overline{\mathbb{T}}.$ Therefore $S\left( w\right) $\ is an identity
transformation. Hence all points $w\in \overline{\mathbb{T}}$ are the fixed
points of $S\left( w\right) .$

Hence the theorem is proved.

\begin{example}
$S\left( w\right) =w+\left\{ \left( 1+2\mathbf{i}_{1}\right) \mathbf{e}%
_{1}+\left( 1+3\mathbf{i}_{1}\right) \mathbf{e}_{2}\right\} $ have the only
fixed point $w=\infty \mathbf{e}_{1}+\infty \mathbf{e}_{2}.$
\end{example}

\begin{example}
$S\left( w\right) =\left\{ \left( 2+3\mathbf{i}_{1}\right) \mathbf{e}%
_{1}+\left( 1+4\mathbf{i}_{1}\right) \mathbf{e}_{2}\right\} w+\left\{ \left(
1+2\mathbf{i}_{1}\right) \mathbf{e}_{1}+\left( 1+3\mathbf{i}_{1}\right) 
\mathbf{e}_{2}\right\} $ have two fixed points $w=\infty \mathbf{e}%
_{1}+\infty \mathbf{e}_{2}$\ and $w=\frac{\left( 1+2\mathbf{i}_{1}\right) 
\mathbf{e}_{1}+\left( 1+3\mathbf{i}_{1}\right) \mathbf{e}_{2}}{1-\left\{
\left( 2+3\mathbf{i}_{1}\right) \mathbf{e}_{1}+\left( 1+4\mathbf{i}%
_{1}\right) \mathbf{e}_{2}\right\} }=\frac{\left( 1+2\mathbf{i}_{1}\right) 
\mathbf{e}_{1}+\left( 1+3\mathbf{i}_{1}\right) \mathbf{e}_{2}}{\left\{ 1%
\mathbf{e}_{1}+1\mathbf{e}_{2}\right\} -\left\{ \left( 2+3\mathbf{i}%
_{1}\right) \mathbf{e}_{1}+\left( 1+4\mathbf{i}_{1}\right) \mathbf{e}%
_{2}\right\} }=\frac{\left( 1+2\mathbf{i}_{1}\right) \mathbf{e}_{1}+\left(
1+3\mathbf{i}_{1}\right) \mathbf{e}_{2}}{\left\{ \left( -1+3\mathbf{i}%
_{1}\right) \mathbf{e}_{1}+\left( 4\mathbf{i}_{1}\right) \mathbf{e}%
_{2}\right\} }=\left( \frac{1}{2}-\frac{1}{2}\mathbf{i}_{1}\right) \mathbf{e}%
_{1}+\left( \frac{3}{4}-\frac{1}{4}\mathbf{i}_{1}\right) \mathbf{e}_{2}$.
\end{example}

\begin{example}
$S\left( w\right) =\frac{\left\{ \left( 4+5\mathbf{i}_{1}\right) \mathbf{e}%
_{1}+\left( 1+2\mathbf{i}_{1}\right) \mathbf{e}_{2}\right\} w-\left\{ \left(
-1+3\mathbf{i}_{1}\right) \mathbf{e}_{1}+\left( 1+8\mathbf{i}_{1}\right) 
\mathbf{e}_{2}\right\} }{\left\{ 1\mathbf{e}_{1}+1\mathbf{e}_{2}\right\}
w+\left\{ \left( 2+2\mathbf{i}_{1}\right) \mathbf{e}_{1}-\left( 3+2\mathbf{i}%
_{1}\right) \mathbf{e}_{2}\right\} }.$

To find the fixed points of $S\left( w\right) $\ we get the following
equation%
\begin{equation}
\left\{ 1\mathbf{e}_{1}+1\mathbf{e}_{2}\right\} w^{2}+\left\{ -\left( 2+3%
\mathbf{i}_{1}\right) \mathbf{e}_{1}-\left( 4+4\mathbf{i}_{1}\right) \mathbf{%
e}_{2}\right\} w+\left\{ \left( -1+3\mathbf{i}_{1}\right) \mathbf{e}%
_{1}+\left( 1+8\mathbf{i}_{1}\right) \mathbf{e}_{2}\right\} =0.  \label{1}
\end{equation}%
Equation $\left( \ref{1}\right) $ is reduced to the system of polynomial
equations with complex coefficients

\begin{eqnarray}
w_{1}^{2}-\left( 2+3\mathbf{i}_{1}\right) w_{1}+\left( -1+3\mathbf{i}%
_{1}\right) &=&0  \label{6} \\
w_{2}^{2}-\left( 4+4\mathbf{i}_{1}\right) w_{2}+\left( 1+8\mathbf{i}%
_{1}\right) &=&0.  \label{7}
\end{eqnarray}

From equation $\left( \ref{6}\right) $\ and $\left( \ref{7}\right) $\ we get 
$w_{1}=\left( 1+\mathbf{i}_{1}\right) ,\left( 1+2\mathbf{i}_{1}\right) $ and 
$w_{2}=\left( 2+\mathbf{i}_{1}\right) ,\left( 2+3\mathbf{i}_{1}\right) .$

Therefore the fixed points of $S\left( w\right) $\ are $w=\left( 1+\mathbf{i}%
_{1}\right) \mathbf{e}_{1}+\left( 2+\mathbf{i}_{1}\right) \mathbf{e}%
_{2},\left( 1+\mathbf{i}_{1}\right) \mathbf{e}_{1}+\left( 2+3\mathbf{i}%
_{1}\right) \mathbf{e}_{2},\left( 1+2\mathbf{i}_{1}\right) \mathbf{e}%
_{1}+\left( 2+\mathbf{i}_{1}\right) \mathbf{e}_{2},$ and $\left( 1+2\mathbf{i%
}_{1}\right) \mathbf{e}_{1}+\left( 2+3\mathbf{i}_{1}\right) \mathbf{e}_{2}.$
\end{example}

\end{document}